\def\teich{{Teichm\"{u}ller}}
\def\mobi{{M{\"{o}}bius}}
\newtheorem{theorem}{Theorem}[section]
\newtheorem{lemma}[theorem]{Lemma}
\def\ifl{\iffalse }
\numberwithin{equation}{section}
\numberwithin{equation}{section}
\theoremstyle{remark}
\begin{document}
\title[ On Ruelle's property]
{On Ruelle's property}

\author{Huo Shengjin}
\address{Department of Mathematics, Tianjin Polytechnic University, Tianjin 300387, China} \email{huoshengjin@tjpu.edu.cn}
\author{Michel Zinsmeister}
\address{Universite D'Orleans, MAPMO, Orleans Cedex 2, France} \email{ zins@unvi-orleans.fr}

%\address{College of Mathematics and statistics,Shenzhen University, Shenzhen, 518060, China} \email{hguo@szu.edu.cn}

\thanks{This work was supported by the Science and Technology Development Fund of Tianjin Commission for Higher Education(Grant No.2017KJ095).}
%the National Natural Science Foundation of China (Grant No.11401432 and Grant No.11571172 ).}
%%National Natural Science Foundation of China (Grant No.11401432 and Grant No.11571172 )
%
\subjclass[2010]{30F35, 30F60}
\keywords{Ruelle's property, Markov map, iterated function system.}
\begin{abstract}
In this paper we  investigate the range of validity of Ruelle's property. First, we show that every finitely-generated Fuchsian group has Ruelle's property. We also prove the existence of an infinitely-generated Fuchsian group satisfying Ruelle's property.  Concerning the negative results we first generalize Astala-Zinsmeister's results by proving that all convergence Fuchsian groups of the first kind fail to have Ruelle's property. At last, we also give some results about the second kind Fuchsian groups.
\end{abstract}

\maketitle

\section{1 Introduction }

 A Fuchsian group is a discrete {\mobi} group $G$ acting on the unit disk $\Delta$. The limit set of $G$, denoted by $\Lambda(G)$, is the set of accumulation points of the $G$-orbit of any point $z\in\Delta$. Since the action of $G$ is properly discontinuous, $\Lambda(G)\subset\partial\Delta.$ A Fuchsian  group $G$ is said to be of the first kind if  the limit set $\Lambda(G)$ is the entire circle. Otherwise, it is of the second kind. Points of the limit set $\Lambda$ naturally correspond to geodesic rays with fixed base point $z_{0}\in \Delta.$ The limit set can be written as the disjoint union of two special subsets; the conical limit set $\Lambda_{c}(G)$, which corresponds to geodesics which return to some compact set infinitely often (the recurrent geodesics) and the escaping limit set, $\Lambda_{e}(G),$ which corresponds to geodesics escaping to infinity.

 The critical exponent (or Poincar\'e exponent) of a Fuchsian group $G$ is defined as
\begin{align}\delta(G)&=\inf\{t: \sum_{g\in G}\exp(-t\rho(0,g(0)))<\infty\}\\
&=\inf\{t:\,\sum_{g\in G}(1-\vert g(0)\vert)^t<+\infty\},
\end{align}
where $\rho$ denotes the hyperbolic metric.

It has been proven in\cite {BJ}  that for any non-elementary group $G$, $\delta(G)=HD(\Lambda_c(G))$,  the Hausdorff dimension of the conical limit set.

  A Fuchsian group is said to be cocompact if the Riemann surface $\Delta/G$ is compact and cofinite if the quotient has finite hypebolic area. A Fuchsian  group $G$ is said to be of divergence type if $\Sigma_{g\in G}(1-|g(0)|)=\infty$. Otherwise, we say it is of convergence type.
  It is well known that
  $$\mathrm{cocompact}\, \subset\,\mathrm{cofinite}\,\subset\,\text{divergence-type}\,\subset\,\mathrm{first~~  kind}.$$
  All the second kind groups are of convergence type but the converse is not true, as we shall see later.\\

  We will call a Fuchsian group exceptional if it is the covering group of the sphere minus $m$ disks  and $n$ points where $1\leq m+n\leq 3, (m,n)\neq (1,0).$\\

Let $G$ be a Fuchsian group and  $\mu$ a bounded measurable function on $\Delta$ such that $||\mu||_{\infty}<1$ and
$$\mu(z)=\mu(g(z))\overline{g'(z)}/g'(z),\,z\in\Delta,\,g\in G.$$
We say that $\mu$ is a  $G$-compatible Beltrami coefficient (or complex dilatation). For a $G$-compatible Beltrami coefficient $\mu$, there is a corresponding quasiconformal mapping $f_{\mu}$ which is analytic outside $\Delta$ and such that $${\mu(z)=\frac{\partial f_{\mu}}{\partial \bar{z}}}/{\frac{\partial f_{\mu}}{\partial z}}, ~~\, a.e.~ z \in\Delta.$$ This map $f_{\mu}$ conjugates $G$ to a quasi-Fuchsian group $G_{\mu}=f_{\mu}\circ G\circ f_{\mu}^{-1}.$ We say that $G_{\mu}$ is a quasiconformal deformation of $G$.

We can generalize to quasi-Fuchsian groups the notion of conical and escaping sets: we can also define the Poincar\'e exponent of such a group by replacing $(1-\vert g(0)\vert)$ in (1.2) by $dist(g(0),\partial{f_\mu(\Delta)})$ and \cite{B, BJ} remains true in this case.

A Fuchsian group $G$ has Bowen's property if the limit set of any quasiconformal deformation of $G$ is either a circle or has Hausdorff dimension  $>1$. In 1979, R. Bowen \cite{Bo1} proved that if $G$ is a cocompact Fuchsian group, then this dichotomy property holds. Soon D. Sullivan \cite{Su2,Su3} extended Bowen's property to all cofinite groups. In 1990, K. Astala and the second author\cite{AZ0} showed that Bowen's property fails for all convergence groups of the first kind. At last, in 2001, C.J. Bishop showed that for all divergence groups, Bowen's property holds.

 We will say a Fuchsian group $G$ has Ruelle's property if for any  family of $G$-compatible Beltrami coefficients $(\mu_{t})$ which is analytic in $t\in\Delta$, the map $t\mapsto HD(\Lambda(G_{\mu_{t}}))$ is  real-analytic in $\Delta$.
In 1982, Ruelle \cite{Ru1} showed that all cocompact groups have this property. In 1997, J.W. Anderson and A.C. Rocha \cite{AR} extended this result to finitely-generated Fuchsian groups without parabolic elements.  In \cite{AZ1, AZ2}, K. Astala and the second author showed that for Fuchsian groups corresponding to Denjoy-Carleson domains or infinite $d$-dimensional "jungle gym" with $d\geq 3$ , Ruelle's property fails. In \cite{Bi3}, C.J. Bishop gave a criterion  for the failure of the Ruelle's property which applies to many divergence type examples including the $d$-dimensional "jungle gym" with $d=1,2$,  thus implying that Ruelle property is not equivalent to Bowen's one.

In this paper we continue to investigate the range of validity of Ruelle property. Firstly, by investigating the role of parabolic points and using Mauldin-Urbanski \cite{MU} techniques, we prove:
\begin{theorem}\label{main1}
 Every finitely-generated  Fuchsian group has Ruelle's property.
\end{theorem}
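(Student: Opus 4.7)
The plan is to extend the Anderson--Rocha result \cite{AR} to allow parabolic elements by combining a Bowen--Series symbolic coding of the action of $G$ on $\partial\Delta$ with the parabolic conformal iterated function system (IFS) formalism of Mauldin--Urbanski \cite{MU}. Since $G$ is geometrically finite, for any $G$-compatible $\mu$ the limit set $\Lambda(G_{\mu})$ differs from the conical limit set $\Lambda_c(G_{\mu})$ only by the at most countable collection of parabolic fixed points, which has Hausdorff dimension zero. Hence $HD(\Lambda(G_{\mu}))=HD(\Lambda_c(G_{\mu}))=\delta(G_{\mu})$, and it suffices to show that $t\mapsto\delta(G_{\mu_t})$ is real-analytic in $t\in\Delta$.

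First I would fix a Ford or Dirichlet fundamental domain with finitely many sides and construct the associated Bowen--Series piecewise-M\"obius Markov map $T:\partial\Delta\to\partial\Delta$ whose itineraries code the conical dynamics. If $G$ has no parabolic elements the map $T$ is uniformly expanding and the argument of \cite{AR} applies directly, so I focus on the parabolic case, where $T$ has finitely many neutral fixed points at the parabolic fixed points. Inducing $T$ on the complement of a system of horocyclic neighborhoods of the cusps, i.e.\ collapsing each parabolic excursion into a single iterate of the corresponding parabolic generator, produces a countable-alphabet conformal IFS $\{\phi_i\}_{i\in I}$ of uniformly contracting M\"obius maps that satisfies the Mauldin--Urbanski axioms: the open set condition, bounded distortion, and the standard parabolic decay $\|(\phi_{p^n})'\|\asymp n^{-2}$ derived from horocycle estimates. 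The limit set of this IFS coincides with $\Lambda_c(G)$ up to a countable set.

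By the Mauldin--Urbanski version of Bowen's formula, $\delta(G)$ is the unique real zero of the topological pressure $P(t)=\lim_{n\to\infty}\frac{1}{n}\log\sum_{|w|=n}\|\phi_w'\|_\infty^t$, and this zero depends real-analytically on parameters along which the branches $\phi_i$ vary holomorphically with uniform summability. For the family $G_{\mu_t}$, the deformed branches are $\phi_i^{(t)}=f_{\mu_t}\circ\phi_i\circ f_{\mu_t}^{-1}$, which depend holomorphically on $t\in\Delta$ by the Ahlfors--Bers measurable Riemann mapping theorem; consequently the associated geometric potentials $-t\log|(\phi_i^{(t)})'|$ vary holomorphically in $t$. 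Combining this with the negativity of $\partial_t P(t)$ at $t=\delta$ (which follows from finiteness of the measure-theoretic entropy of the Gibbs state) and the implicit function theorem gives the desired real-analyticity of $t\mapsto\delta(G_{\mu_t})$.

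The main obstacle is ensuring that the induced system fits into the Mauldin--Urbanski framework uniformly in $t$: the parabolic asymptotics $\|(\phi_{p^n}^{(t)})'\|\asymp n^{-2}$ must persist under quasiconformal conjugation with constants controlled locally uniformly in $t$, and the infinite pressure series must define a jointly real-analytic function of the deformation parameter and the branch data, so that the joint holomorphic dependence of the alphabet can be summed term by term. A secondary technical point is the second-kind case: then the Bowen--Series map is defined only on a Cantor subset of $\partial\Delta$, but the inducing and thermodynamic formalism carry through with minor modifications since $\Lambda_c(G)$ is still captured by the same symbolic coding.
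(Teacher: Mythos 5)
Your proposal follows essentially the same route as the paper: a Bowen--Series Markov coding, inducing (first-return map) to obtain a uniformly expanding countable-alphabet conformal IFS whose parabolic branches decay like $n^{-2}$, conjugation of the branches by $f_{\mu_t}$, regularity of the resulting system in the Mauldin--Urbanski sense with $\theta=\tfrac12$, and real-analyticity of the pressure zero via the transfer-operator eigenvalue and the implicit function theorem. The technical points you flag (persistence of the $n^{-2}$ asymptotics under quasiconformal conjugation, since parabolics map to parabolics, and the treatment of second-kind groups via the same coding) are exactly the ones the paper addresses.
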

Using the same kind of techniques we also prove the existence of an infinitely-generated Fuchsian group with Ruelle's property:
\begin{theorem}\label{main3}
 There exists a sequence $(s_n)$ of real numbers increasing to infinity such that the Fuchsian group uniformizing $S=\mathbb{C}\backslash\{s_n,\,n\geq 0\}$ has Ruelle's property.
\end{theorem}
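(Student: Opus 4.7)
The plan is to imitate the strategy used for Theorem \ref{main1}, but working with an infinite conformal iterated function system (IFS) in the sense of Mauldin--Urbanski \cite{MU}. The goal is to code the conical limit set of the Fuchsian group $G$ uniformizing $S$ on a fundamental arc of $\partial\Delta$ by an infinite conformal IFS satisfying the standard Mauldin--Urbanski axioms (open set condition, bounded distortion, strong conformality, and a summability property implying that the critical exponent of the pressure lies strictly below the Hausdorff dimension of the limit set). The real-analyticity of the topological pressure of such an IFS as a function of its contractions is the engine of the proof; Bowen's formula plus the implicit function theorem will then yield the real-analyticity of $t\mapsto HD(\Lambda(G_{\mu_t}))$.

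The key freedom is in the choice of $(s_n)$. I would pick the gaps $s_{n+1}-s_n$ inductively and so rapidly increasing (for instance doubly exponentially) that in the intrinsic hyperbolic metric of $S$ each puncture $s_n$ sits at the centre of an arbitrarily large cusp collar, separated by a definite and arbitrarily big hyperbolic distance from all the others. Lifting to $\Delta$, these cusp collars produce a Markov partition of $\partial\Delta$ (off the parabolic fixed points) whose first-return map is an infinite conformal Markov map. The standard inducing scheme, which replaces each parabolic generator by the geometric sequence of its iterates turned into strict contractions, converts this into a genuine Mauldin--Urbanski conformal IFS $\{\varphi_n\}$ whose limit set has the same Hausdorff dimension as $\Lambda_c(G)$. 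With $(s_n)$ growing fast enough, the summability exponent $\inf\{s:\sum_n(\mathrm{diam}\,\varphi_n)^s<\infty\}$ can be made strictly smaller than $HD(\Lambda(G))$, so that Bowen's formula applies in the Mauldin--Urbanski setting.

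For a holomorphic family $(\mu_t)$ of $G$-compatible Beltrami coefficients, the Ahlfors--Bers theorem makes the generators of $G_{\mu_t}$, and hence the induced contractions $\varphi_n^{(t)}$, depend holomorphically on $t$. Provided $(s_n)$ was chosen sufficiently sparse, a small quasiconformal perturbation only mildly perturbs all the geometric bounds, so the Mauldin--Urbanski axioms persist with uniform constants on a neighbourhood of each $t_0\in\Delta$. The pressure $P(t,s)$ is then jointly real-analytic in $(t,s)$; combining this with Bowen's formula $P(t,HD(\Lambda(G_{\mu_t})))=0$ and the real-analytic implicit function theorem yields the real-analyticity of $t\mapsto HD(\Lambda(G_{\mu_t}))$.

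The main obstacle is the uniform verification of the Mauldin--Urbanski hypotheses across the entire analytic family. Because there are infinitely many parabolic fixed points and because they all move under deformation, the inducing scheme and its tail estimates must be robust under small quasiconformal distortion. This is precisely what the freedom to make $(s_n)$ grow arbitrarily rapidly buys: one chooses the gaps large enough that the summability margin and the distortion constants absorb the quasiconformal errors coming from $f_{\mu_t}$, and the exact growth rate required by this uniformity is what pins down the sequence asserted in the statement.
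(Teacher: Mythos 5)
Your overall engine---code the limit set by an infinite conformal IFS in the Mauldin--Urbanski sense, show the pressure $P(t,\sigma)$ is real-analytic, and combine Bowen's formula with the implicit function theorem---is exactly the paper's. The gap is in how you produce the sequence and how you verify the IFS hypotheses. The paper does \emph{not} choose $(s_n)$ sparse in $\mathbb{C}$ and then analyze the resulting covering group; it goes in the opposite direction. It prescribes the group explicitly as the Rubel--Ryff reflection group attached to the disks with diameters $[2^{n-1},2^n]$ (and their mirror images) and \emph{defines} $s_n$ as the image of $2^n$ under the uniformizing map of the complement. With this explicit fundamental domain the inverse branches of the induced Bowen--Series map are indexed by pairs $(n,k)$ ($n$ the disk, $k$ the parabolic winding number), and one computes $\parallel(\phi^{t}_{n,k})'\parallel\asymp\parallel(\phi^{t}_{n,0})'\parallel k^{-2}$ with $2^{-n\beta}\leq c^{-1}\parallel(\phi^{t}_{n,0})'\parallel\leq C2^{-n\alpha}$, where $\alpha,\beta$ are the H\"{o}lder exponents of $f_{\mu_t}$ and $f_{\mu_t}^{-1}$. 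Hence $\theta_t=\frac12$ \emph{exactly} and $\psi^t(\frac12)=\infty$ for every $t\in\Delta$: the system is regular uniformly over the whole family, because geometric decay raised to any positive H\"{o}lder power is still geometric, and the $k^{-2}$ parabolic tails are preserved since parabolics are conjugated to parabolics. Your proposal asserts the analogous facts (``the summability exponent can be made strictly smaller than $HD$'', ``the axioms persist with uniform constants'') without a mechanism for proving them.

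Two concrete problems. First, your uniformity argument is a small-perturbation argument, but $t$ ranges over all of $\Delta$: $\|\mu_t\|_\infty$ can be arbitrarily close to $1$ and the H\"{o}lder exponent of $f_{\mu_t}$ arbitrarily close to $0$, while $(s_n)$ must be fixed once and for all, before the family $(\mu_t)$ is given. ``Choose the gaps large enough to absorb the quasiconformal errors'' therefore cannot pin down a single sequence; what actually saves the day is the scale-invariant geometric decay, which you never isolate. Second, prescribing the Euclidean gaps $s_{n+1}-s_n$ and deducing the contraction ratios of the induced IFS requires quantitative control of the uniformization of $\mathbb{C}\setminus\{s_n\}$, for which you offer no tool; the paper's construction circumvents this entirely by building the group first. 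Note also that because of the infinitely many cusps the summability exponent is always at least $\frac12$, so sparseness alone cannot push it below an arbitrary target, and the regularity condition $\psi(\theta)=\infty$ at the critical exponent---which is what guarantees that the zero of the pressure lies in the open half-line of analyticity---is never checked in your outline.
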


{\noindent {\bf Remark:} } Theorem \ref{main3} does not hold for any sequence $(s_{n}).$ For example, $\mathbb{C}\setminus\mathbb{Z}$ is a $\mathbb{Z}$-covering of the twice-punctured sphere as was noticed in \cite{AD}(we thank Mariusz Urbanski for having pointed out to us this reference), which implies by the result of Bishop \cite{Bi3} that Ruelle's property fails in this case.

Concerning the negative results we first generalize Astala-Zinsmeister's results in \cite{AZ0,AZ} by proving:
\begin{theorem}{\label{main}}
All convergence type Fuchsian groups of the first kind fail to have Ruelle's property.
  \end{theorem}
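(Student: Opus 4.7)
The strategy generalizes Astala--Zinsmeister's technique from \cite{AZ0,AZ}: construct, for a given convergence-type first-kind Fuchsian group $G$, a holomorphic family $(\mu_t)_{t\in\Delta}$ of $G$-compatible Beltrami coefficients along which the function $\phi(t):=HD(\Lambda(G_{\mu_t}))$ is identically $1$ on an open subset of $\Delta$ yet strictly greater than $1$ at some other point. The real-analytic identity principle will then preclude $\phi$ from being real-analytic on $\Delta$, contradicting Ruelle's property.

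The essential input is the convergence hypothesis $\sum_{g\in G}(1-|g(0)|)<\infty$. As in the Astala--Zinsmeister argument for the failure of Bowen's dichotomy, it allows one to construct a $G$-compatible Beltrami coefficient $\nu_0$ for which $f_{\nu_0}(\partial\Delta)$ is rectifiable, hence has Hausdorff dimension $1$: writing the length of $f_{\nu_0}(\partial\Delta)$ as a sum over $G$-translates of an integral over a fundamental domain of $G$, and using quasiconformal bounds on the derivative, the finiteness reduces to convergence of a Poincar\'e-type series at exponent $1$, which is precisely the convergence hypothesis. A careful inspection of this length estimate moreover shows that the ``rectifying locus'' is open in the space of $G$-compatible Beltrami coefficients, so rectifiability persists throughout a small $L^\infty$-neighborhood of $\nu_0$. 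Executing this construction intrinsically for an \emph{arbitrary} convergence-type first-kind group --- without the explicit geometric models (Denjoy--Carleson, jungle gyms) exploited in \cite{AZ0,AZ} --- requires working on the quotient surface $\Delta/G$, and is the main technical step.

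With $\nu_0$ at hand, one chooses an additional $G$-compatible Beltrami coefficient $\eta$ of sufficiently small $L^\infty$-norm such that for some $t^{\ast}\in\Delta$ the quasicircle $f_{\nu_0+t^{\ast}\eta}(\partial\Delta)$ has Hausdorff dimension strictly greater than $1$; such $\eta$ exist by the richness of the Bers deformation space of any non-elementary Fuchsian group, where one can arrange Beltrami coefficients whose image quasicircles contain a fractal piece of positive dimension excess. The holomorphic family $\mu_t:=\nu_0+t\eta$, $t\in\Delta$, then satisfies $\phi(t)=1$ on a small disk around $0$ (where $\mu_t$ remains in the rectifying neighborhood of $\nu_0$), while $\phi(t^{\ast})>1$, giving the required contradiction with real-analyticity. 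The principal obstacle is thus the first step --- building $\nu_0$ and proving that rectifiability is $L^\infty$-open around it in full generality --- since stability of arc length under quasiconformal perturbations is subtle and requires careful length-distortion estimates rather than a direct compactness argument.
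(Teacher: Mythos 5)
Your overall strategy --- exhibit a holomorphic family along which $HD(\Lambda(G_{\mu_t}))$ equals $1$ on an open set but exceeds $1$ somewhere, and invoke the identity principle for real-analytic functions --- is exactly the shape of the paper's argument. But the execution has genuine gaps. First, your key technical claim, that the ``rectifying locus'' is open in the $L^\infty$-topology around your constructed $\nu_0$, is unjustified and is not the mechanism that actually works. The correct tool from BMO--Teichm\"uller theory is quantitative: if the measure $|\mu(z)|^2/(1-|z|^2)\,dx\,dy$ has \emph{small Carleson norm}, then $f_\mu(\partial\Delta)$ is chord-arc. The paper exploits this by taking the ray $t\mapsto t\mu$, whose Carleson norm scales like $|t|^2$ and is therefore small near $t=0$; no openness of rectifiability around a nontrivial $\nu_0$ is needed (indeed $\nu_0=0$ already gives a circle, so your entire first construction step is superfluous for Ruelle's property, as opposed to Bowen's, where one must avoid circles). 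Second, your argument is internally inconsistent as stated: if rectifiability really persisted in an $L^\infty$-neighborhood of $\nu_0$ and $\eta$ had sufficiently small $L^\infty$-norm, then $\nu_0+t\eta$ would remain in that neighborhood for every $t\in\Delta$, so no $t^\ast$ with $HD>1$ could exist inside your family.

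Third, you replace the essential existence input by a vague appeal to the ``richness of the Bers deformation space.'' That a general (non-exceptional, torsion-free) Fuchsian group admits a quasiconformal deformation with $\delta(G_\mu)>1$ is a nontrivial theorem of Bishop (Lemma 2.2 in the paper, from ``Big deformations near infinity''), not a soft fact. Moreover, to make the Carleson-measure lemma applicable one must arrange that $\mu$ is compactly supported on the quotient surface $\Delta/G$ (Lemma 2.3), and to do so without losing the property $\delta(G_\mu)>1$ one needs the lower semicontinuity of the Poincar\'e exponent under pointwise convergence of uniformly bounded dilatations (Lemma 2.1). These two reductions are absent from your proposal, and without them the connection between the convergence hypothesis and rectifiability of the deformed limit sets does not close. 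In short: keep the contradiction-with-analyticity skeleton, but replace your first step by the ray $t\mapsto t\mu$ with $\mu$ compactly supported and $\delta(G_\mu)>1$, and justify the dimension-one statement for small $t$ via the small-Carleson-norm criterion rather than an openness claim.
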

Concerning  the second kind (thus convergence type) Fuchsian groups, we prove:
\begin{theorem}\label{main2}
Let $S$ be an infinite area hyperbolic Riemann surface and $G$ be the universal covering group of $S.$ Let $\gamma$ be a closed geodesic in the surface $S.$ Cutting $S$ along $\gamma$, one obtains one or two bordered Riemann surfaces. We construct a new surface $S'$ by gluing the one of infinite area with one or two funnels along $\gamma$. If $G$ is of the first kind, then the corresponding second-kind covering group $G'$ of $S'$ fails to have Ruelle's property.
\end{theorem}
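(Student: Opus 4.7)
The plan is to construct an analytic family $(\mu_t)_{t\in\Delta}$ of $G'$-compatible Beltrami coefficients for which the map $t\mapsto HD(\Lambda(G'_{\mu_t}))$ fails to be real-analytic. Since $G'$ is of the second kind it is automatically of convergence type, so Theorem \ref{main} does not apply directly; the plan is instead to adapt the Astala-Zinsmeister strategy underlying Theorem \ref{main} to the present second-kind setting, using the first-kind hypothesis on $G$ to guarantee that the piece $S_1\subset S'$ retains the geometric complexity required to produce a non-analytic dimension. Note that the assumption that $S$ has infinite area with $G$ of the first kind forces $G$, and hence $G'$, to be infinitely generated, so Theorem \ref{main1} does not rescue the analyticity either.

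The first step is a geometric setup. Fix a fundamental domain $\Omega'=\Omega_1\cup\Omega_f$ for $G'$ in $\Delta$, with $\Omega_1$ the lift of a fundamental polygon for $S_1$ and $\Omega_f$ the half-plane(s) representing the attached funnel(s). Let $g\in G'$ be the hyperbolic element whose axis projects to $\gamma$; this axis separates $\Omega_1$ from $\Omega_f$, and its endpoints bound an arc $I\subset\partial\Delta$ contained in the set of discontinuity of $G'$. I would support the coefficients $\mu_t$ in a hyperbolic collar of $\gamma$ lying entirely on the $\Omega_1$-side, so that the conformal structure of the funnel is left unchanged while the length and twist of $\gamma$ vary analytically with $t$. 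Using the Mauldin-Urbanski iterated function system formalism already invoked for Theorems \ref{main1} and \ref{main3}, I would then realize $G'_{\mu_t}$ as a graph-directed Markov system coded by the cosets of $\langle g\rangle$, and express $\delta(G'_{\mu_t})=HD(\Lambda(G'_{\mu_t}))$ as the zero of a pressure function $P_t(s)$ obtained by summing contractions over these cosets.

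The main step is to show that $P_t(s)$ undergoes a phase transition in $t$ of only finite regularity at the critical exponent. The convergence-type property of $G'$ guarantees that the Poincar\'e series converges at $\delta(G')$, so that the critical exponent is governed by a genuine convergence threshold. The crucial input from the first-kind hypothesis is that the $G$-orbit of a point of $S_1$ accumulates on $\gamma$ from the $S_2$-side with positive density; after cutting and attaching funnels, this yields an abundant family of small-norm cosets of $\langle g\rangle$ in $G'$ whose contributions to $P_t(s)$ are parallel to the Poincar\'e series of the first-kind convergence groups treated in Theorem \ref{main}. The principal obstacle, and the heart of the proof, is to control the tail of these infinitely many cosets uniformly near the critical exponent so as to detect a sharp transition: I expect this to follow from a precise asymptotic expansion of $P_t(s)$ in the spirit of Theorem \ref{main}, the finitely many funnel contributions providing only an analytic correction that does not smooth out the transition, and the equality $HD(\Lambda(G'_{\mu_t}))=\delta(G'_{\mu_t})$ in the range of deformations under consideration being established via the Mauldin-Urbanski thermodynamic formalism.
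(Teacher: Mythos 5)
Your proposal takes a fundamentally different route from the paper, and it contains a gap that I do not see how to close. The central problem is your identification of $HD(\Lambda(G'_{\mu_t}))$ with $\delta(G'_{\mu_t})$, realized as the zero of a pressure function for a Markov system coded on cosets of $\langle g\rangle$. For a second-kind (hence convergence-type) group the exponent $\delta$ only computes the dimension of the \emph{conical} limit set, while the full limit set also contains the escaping limit set, and $HD(\Lambda)=\max\{HD(\Lambda_c),HD(\Lambda_e)\}$. The escaping part is not captured by any pressure/IFS formalism, and it is precisely the player that drives the paper's argument. The paper first proves the Claim $HD(\Lambda_e(G'))=1$, using the Fernandez--Melian dichotomy for first-kind, infinite-area groups (this is where the hypothesis that $G$ is of the first kind enters: in the convergence case a positive-measure set of geodesics escapes without crossing $\gamma$, and in the divergence case one builds a Fernandez--Melian tree of geodesic arcs avoiding the funnel whose tips have dimension $1$). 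It then takes a \emph{compactly supported} $\mu$ with $\delta(G'_\mu)>1$ (Bishop's Lemma \ref{le2} combined with Lemma \ref{le1}), invokes Bishop's theorem that compact deformations do not change $HD(\Lambda_e)$, and uses the Carleson-measure property of compactly supported coefficients (Lemma \ref{le3}) to make $f_{t\mu}(\partial\Delta)$ rectifiable for small $t$. This squeezes $HD(\Lambda(G'_{t\mu}))$ to be exactly $1$ on a neighborhood of $t=0$, while it exceeds $1$ at $t=1$; a function locally constant near $0$ and nonconstant cannot be real-analytic. Your argument never establishes the lower bound $HD(\Lambda(G'_{t\mu}))\geq 1$ for small $t$; without it, the dimension of the limit set of a second-kind group could sit strictly below $1$ and vary analytically, and no contradiction results.

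A second, related gap: your proposed deformations (varying length and twist of $\gamma$ via coefficients supported in a collar) are not shown to produce either a dimension above $1$ or any loss of analyticity, and the claimed ``phase transition of finite regularity'' in $P_t(s)$ is asserted rather than derived --- you give no mechanism forcing the transition to be non-smooth, and the convergence of the Poincar\'e series at $\delta$ does not by itself produce one. The paper sidesteps this entirely by importing Bishop's big-deformation result to manufacture a single $\mu$ with $\delta(G'_\mu)>1$; no fine analysis of the pressure function near its critical exponent is needed. If you want to salvage your approach you would at minimum need to (i) treat the escaping limit set separately from the coded conical dynamics, and (ii) replace the collar deformations by deformations known to raise $\delta$ above $1$.
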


\section{Proof of Theorem \ref{main} }

In order to prove this theorem we will first need the  following lemma  from (\cite{Bi4}, Lemma 2.1).
\begin{lemma}{\label{le1}}
Suppose that $G$ is a Fuchsian group and $\mu$ is a $G$-compatible complex dilatation.
If $\{\mu_{n}\}$ is a family of $G$-compatible complex dilatations with $L^{\infty}$ norms uniformly bounded by $k<1$ and which converges pointwise to $\mu$, then $$\liminf_{n\rightarrow \infty}\delta(G_{\mu_{n}})\geq \delta(G_{\mu}). $$
\end{lemma}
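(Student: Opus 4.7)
The plan is to reduce to the Poincar\'e series characterization of the critical exponent. As noted just before the lemma, for any $G$-compatible Beltrami coefficient $\mu$ one has
$$\delta(G_\mu)\;=\;\inf\Bigl\{t\geq 0\,:\,\sum_{g\in G}\mathrm{dist}\bigl(f_\mu(g(0)),\,\partial f_\mu(\Delta)\bigr)^{t}<+\infty\Bigr\}.$$
Setting $a_g(\mu):=\mathrm{dist}(f_\mu(g(0)),\partial f_\mu(\Delta))$ and fixing any $t<\delta(G_\mu)$, the $\mu$-series equals $+\infty$, so it suffices to show $\delta(G_{\mu_n})\geq t$ for $n$ sufficiently large.

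The next step is to pass to a pointwise-along-the-orbit statement. Normalise each $f_{\mu_n}$ to fix three points of $\widehat{\C}$; the uniform bound $\|\mu_n\|_\infty\leq k<1$ makes $\{f_{\mu_n}\}$ a normal family of $K$-quasiconformal maps with $K=(1+k)/(1-k)$, and the standard continuity theorem for the Beltrami equation (pointwise a.e.\ convergence of coefficients under a uniform $L^\infty$ bound) forces $f_{\mu_n}\to f_\mu$ uniformly on $\widehat{\C}$. The complementary quasidisks $\Omega_{\mu_n}^{c}$ therefore converge to $\Omega_\mu^{c}$ in the Hausdorff topology, and for each fixed $g\in G$ we obtain $a_g(\mu_n)\to a_g(\mu)$. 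Fatou's lemma applied to counting measure on $G$ then yields
$$\sum_{g\in G}a_g(\mu)^{t}\;\leq\;\liminf_{n\to\infty}\sum_{g\in G}a_g(\mu_n)^{t},$$
so the Poincar\'e series $P_n(t):=\sum_g a_g(\mu_n)^{t}$ satisfies $P_n(t)\to +\infty$.

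The main obstacle is that ``$P_n(t)\to+\infty$'' is strictly weaker than ``$P_n(t)=+\infty$ for $n$ large'', which is what is actually required to deduce $\delta(G_{\mu_n})\geq t$. I would close this gap by combining a tail estimate with a Patterson--Sullivan compactness argument. For the tail estimate: since $a_g(\mu)\to 0$ as $g$ leaves compact subsets of $\Delta$ and $\Omega_{\mu_n}^{c}\to\Omega_\mu^{c}$ in Hausdorff metric, for every $\eta>0$ one obtains a finite $F\subset G$ and $N$ with $a_g(\mu_n)\leq \eta$ for $g\notin F$ and $n\geq N$; hence for any $\varepsilon>0$,
$$P_n(t-\varepsilon)\;\geq\;\eta^{-\varepsilon}\Bigl(P_n(t)-\sum_{g\in F}a_g(\mu_n)^{t}\Bigr)\;\longrightarrow\;+\infty.$$
Normalising the Patterson--Sullivan measures on $\Lambda(G_{\mu_n})$ and passing to a weak$^{*}$ limit then produces a genuine $(\liminf_n\delta(G_{\mu_n}))$-conformal measure on $\Lambda(G_\mu)$, and Sullivan's inequality forces $\delta(G_\mu)\leq\liminf_n\delta(G_{\mu_n})$. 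Letting $t\nearrow\delta(G_\mu)$ gives the lemma; the detailed execution of this last step is the technical heart and is carried out in Bishop \cite{Bi4}.
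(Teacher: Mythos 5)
The paper offers no proof of this statement: Lemma~\ref{le1} is quoted verbatim from Bishop (\cite{Bi4}, Lemma 2.1), so there is no internal argument to compare yours against, and your attempt has to be judged on its own terms. On those terms it contains a genuine gap, one that you yourself half-acknowledge. You correctly isolate the central difficulty --- Fatou's lemma applied to $a_g(\mu_n)\to a_g(\mu)$ only yields $P_n(t)\to+\infty$ for $t<\delta(G_\mu)$, whereas $\delta(G_{\mu_n})\geq t$ requires the much stronger statement $P_n(s)=+\infty$ for $s<t$ --- but neither of your two devices for closing it works as written. The tail estimate is a non sequitur: the inequality $P_n(t-\varepsilon)\geq \eta^{-\varepsilon}\bigl(P_n(t)-\sum_{g\in F}a_g(\mu_n)^t\bigr)$ multiplies a quantity tending to infinity by the fixed constant $\eta^{-\varepsilon}$, so its conclusion is again only ``$P_n(t-\varepsilon)\to+\infty$''; each individual series may still converge, and no choice of $\eta$, $F$, $N$ made uniformly in $n$ can upgrade divergence-in-the-limit to divergence term by term. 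The entire burden of the proof therefore falls on the final sentence about extracting a weak-$*$ limit of Patterson--Sullivan measures and invoking Sullivan's shadow-lemma inequality, which you state in one line and then explicitly defer to \cite{Bi4} --- that is, to the very reference whose content you are supposed to be reproving. The deferred step is not routine: one must check that the limit measure is nonzero, supported on $\Lambda(G_\mu)$, genuinely $s$-conformal for the limiting group (which requires locally uniform convergence of the conjugated generators and their derivatives), and not degenerate in a way that defeats the shadow lemma (atoms at parabolic fixed points being the standard worry).

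A secondary structural criticism: if the Patterson--Sullivan limit argument is carried out, it proves the lemma outright, and the Poincar\'e-series normalization, the Fatou step, and the tail estimate all become superfluous. As it stands your proposal consists of a correct but ultimately unused reduction, a vacuous intermediate step, and an unproved appeal to the source being cited; the missing idea is precisely a self-contained mechanism for converting pointwise convergence of the deformations into divergence of the deformed Poincar\'e series at a fixed exponent (for instance, exhausting $\delta(G_\mu)$ by critical exponents of finitely generated, convex cocompact subgroups, for which $\delta$ does vary continuously under quasiconformal deformation, and which then embed as subgroups of every $G_{\mu_n}$).
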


For quasiconformal deformations of Fuchsian groups, C. J. Bishop\cite{Bi3} gave the following result.

\begin{lemma}{\label{le2}}
If $G$ is a torsion free non-exceptional type Fuchsian group, then $G$ has a quasiconformal deformation $G_{\mu}$ with $HD(\Lambda(G_{\mu}))\geq\delta(G_{\mu})>1.$
\end{lemma}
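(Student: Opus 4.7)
The plan is to construct a $G$-compatible Beltrami coefficient $\mu$ with $\delta(G_\mu)>1$; the remaining inequality $HD(\Lambda(G_\mu))\geq \delta(G_\mu)$ is then immediate from the Bishop--Jones identity $\delta=HD(\Lambda_c)$ recalled in the introduction, since $\Lambda_c\subseteq\Lambda$.

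The first step is to extract a convex-cocompact two-generator free Schottky subgroup $F=\langle g_1,g_2\rangle\leq G$. The non-exceptional hypothesis, together with torsion-freeness, forces the Riemann surface $\Delta/G$ to have strictly more topology than a sphere with at most three disks or punctures; in particular it contains an essential embedded pair of pants $P$ whose three boundary curves are simple closed geodesics of $\Delta/G$. Lifting $P$ produces the desired $F$, whose limit set $\Lambda(F)\subset\partial\Delta$ is a Cantor set with $0<\delta(F)\leq 1$. Note that $F\leq G$ implies, term by term on the Poincar\'e series, $\delta(F_\mu)\leq \delta(G_\mu)$ for every $G$-compatible $\mu$.

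The second step is to find a $G$-equivariant $\mu$ that makes $\delta(F_\mu)>1$; combined with the inclusion $\Lambda(F_\mu)\subseteq\Lambda(G_\mu)$ this yields $\delta(G_\mu)\geq \delta(F_\mu)>1$, which is what we need. For this step I would work inside the Bers slice of $G$, parametrized by bounded $G$-automorphic holomorphic quadratic differentials on the exterior disk. Every $\mu$ produced in this way is automatically $G$-compatible and, by restriction, also $F$-compatible. As one approaches the boundary of the Bers slice, singly or doubly degenerate groups appear whose limit sets have Hausdorff dimension equal to $2$; by Lemma \ref{le1} (lower-semicontinuity of $\delta$ under pointwise convergence of uniformly bounded compatible Beltrami coefficients), suitable interior points of the slice must then realize $\delta(G_\mu)>1$.

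The main obstacle is the dimension-at-the-Bers-boundary input for non-exceptional $G$: one must know that the Bers slice has enough Teichm\"uller room (this is precisely where non-exceptionality is used, since the excluded exceptional quotients have trivial or one-dimensional deformation space) and that degenerating ends produce limit sets filling the ambient topological disk. An alternative, more hands-on route avoiding Bers-boundary theory would be to implement an explicit $G$-equivariant pinching deformation supported on a collar of one of the pants curves of $P$ and run a direct pressure-equation computation on the pair of contractions induced on the Schottky subgroup $F$ to push $\delta(F_\mu)$ past $1$; but in either approach the technical core is the same compatibility issue, namely that the $G$-equivariance constraint on $\mu$ still leaves enough freedom inside $T(G)$ to realize a deformation of $F$ whose limit Cantor set crosses the dimension-one threshold.
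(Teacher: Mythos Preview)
The paper does not prove this lemma; it is quoted verbatim as a result of C.~J.~Bishop \cite{Bi3} and used as a black box. So there is no in-paper argument to compare your sketch against.

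That said, your proposed argument has a genuine gap. The Bers-boundary step invokes Lemma~\ref{le1} to pass from ``dimension $2$ at the boundary'' to ``$\delta(G_\mu)>1$ at a nearby interior point'', but Lemma~\ref{le1} requires the family $(\mu_n)$ to have $L^\infty$ norms uniformly bounded by some $k<1$, whereas approaching the Bers boundary forces $\|\mu_n\|_\infty\to 1$. So the lower-semicontinuity lemma as stated is simply not applicable here; you would need an entirely different convergence statement (lower-semicontinuity of $\delta$ under algebraic or geometric convergence of the \emph{groups}, not pointwise convergence of bounded Beltrami data). Moreover, the lemma must cover infinitely generated $G$, where the structure of the Bers slice and the existence of degenerate boundary groups with $\delta=2$ are far from standard. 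The Schottky subgroup $F$ you carefully set up in Step~1 is never actually used in the argument you run: you switch to reasoning about $\delta(G_\mu)$ directly, so the reduction to $F$ is idle.

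Your ``alternative route'' --- a concrete $G$-equivariant deformation supported in a collar, with a direct estimate pushing the critical exponent past $1$ --- is in fact much closer to what Bishop does in \cite{Bi3}. The non-exceptional hypothesis is exactly what guarantees a thick embedded piece of surface on which to support such a deformation, and the argument there is a hands-on construction rather than an appeal to boundary theory. If you want a self-contained proof, that is the line to pursue; the Bers-boundary shortcut, as written, does not close.
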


Before we continue the proof let us recall some facts from $BMO$-Teichm$\ddot{\mathrm u}$ller theory. A Carleson measure on the unit disk $\Delta$ is a positive measure $\nu$ such that there exists a constant $C$ such that for any $z\in\partial{\Delta}$ and any $r<1$,
$$\nu(\Delta\cap D(z,r))\leq Cr,$$
where $D(z,r)$ denotes the disk of center $z$ and radius $r$. If $G$ is a convergence-type Fuchsian group then
$$\sum (1-\vert g(0)\vert)\delta_{g(0)}$$
is a Carleson measure ($\delta_z$ stands for the Dirac mass at $z$). It follows that all convergence type Fuchsian groups $G$ have $G-$compatible Beltrami coefficients $\mu$ such that
$$\frac{\vert \mu (z)\vert ^2}{1-\vert z\vert}dxdy$$
is a Carleson measure. For these coefficients it follows that $\log (f_\mu')$ belongs to the space $BMOA(\Delta)$ with a norm controlled by the above Carleson measure norm. In particular, when the Carleson norm is small then $\partial{f_\mu(\Delta)}$ is a rectifiable (chord-arc) curve. This is essential for the proof that convergence-type first-kind Fuchsian groups fail to have Bowen property.

These properties imply the
\begin{lemma}{\label{le3}}
Suppose $G$ is a convergence type Fuchsian group and  $\mu$  is a $G$-compatible complex dilatation. If $\mu$ is compactly supported on the surface $\Delta/G$,(we say $\mu$ induces a compact deformation) then
$$\displaystyle\frac{|\mu(z)|^{2}}{1-|z|^{2}}\in CM(\Delta),$$
where $CM(\Delta)$ denotes the set of all Carleson measures of $\Delta.$
\end{lemma}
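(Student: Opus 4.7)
My plan is to reduce the Carleson estimate for $|\mu(z)|^{2}/(1-|z|^{2})\,dxdy$ to the Carleson measure property of the orbit sum $\sum_{g}(1-|g(0)|)\delta_{g(0)}$, already recalled for convergence type Fuchsian groups in the paragraph preceding the lemma. The bridge between the two measures is that compact support of $\mu$ on the quotient $\Delta/G$ allows one to confine $\mathrm{supp}(\mu)$ to a union of $G$-translates of a single compact set $K\subset\Delta$.

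First I would fix a Dirichlet fundamental polygon and choose a compact $K\subset\Delta$ inside it so that $\mathrm{supp}(\mu)\subset\bigcup_{g\in G}g(K)$ with the translates $g(K)$ pairwise disjoint up to a set of measure zero; the $G$-compatibility relation $\mu(g(z))\overline{g'(z)}/g'(z)=\mu(z)$ yields $|\mu(z)|\leq\|\mu\|_{\infty}$ uniformly. I would then split the Carleson integral over $D(\zeta,r)\cap\Delta$ as the sum over $g\in G$ of $\int_{g(K)\cap D(\zeta,r)}dA/(1-|z|^{2})$.

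The next step is to bound each summand by a constant (depending only on $K$) times $1-|g(0)|$. Changing variables via $z=g(w)$ and using the identity $(1-|w|^{2})|g'(w)|=1-|g(w)|^{2}$, valid for every M\"obius automorphism of $\Delta$, the integrand transforms into $|g'(w)|/(1-|w|^{2})$ on $K$; since $|g'(w)|\asymp 1-|g(0)|^{2}$ uniformly for $w\in K$ (standard Koebe-type distortion on a compact subset of $\Delta$), the summand is at most $C_{K}(1-|g(0)|)$.

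The geometric step is to show that $g(K)\cap D(\zeta,r)\neq\emptyset$ forces $g(0)\in D(\zeta,C'_{K}r)$. Picking $z$ in the intersection, I would use $1-|z|\leq r$ together with the same distortion estimates to deduce first $1-|g(0)|\leq C''_{K}r$, and then the triangle inequality $|g(0)-\zeta|\leq|g(0)-z|+|z-\zeta|\leq C_{K}(1-|g(0)|)+r$ to conclude $|g(0)-\zeta|\leq C'_{K}r$. Summing the bound of the previous paragraph and invoking the Carleson property of $\sum_{g}(1-|g(0)|)\delta_{g(0)}$ on the ball $D(\zeta,C'_{K}r)$ completes the argument. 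I do not foresee any substantial obstacle: the essential content is packaging standard M\"obius distortion estimates with the already-recalled Carleson property of the orbit measure, the only care being that all constants depend only on the fixed compact set $K$ and not on the individual group element $g$.
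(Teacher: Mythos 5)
Your proposal is correct and follows exactly the route the paper intends: the paper offers no written proof of this lemma beyond asserting that it follows from the Carleson property of the orbit measure $\sum_{g}(1-|g(0)|)\delta_{g(0)}$ recalled just before the statement, and your argument is precisely the missing reduction, carried out with the right distortion estimates (the identity $(1-|w|^{2})|g'(w)|=1-|g(w)|^{2}$, comparability of $|g'|$ on a compact $K$, and the localization of $g(0)$ to $D(\zeta,C'_{K}r)$). No gaps; your writeup in fact supplies more detail than the paper does.
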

We can now prove the theorem. First, by Bishop's result Lemma \ref{le2} there exists a $G$-compatible Beltrami coefficient $\mu$ such that $\delta(G_\mu)>1$. Using Lemma\ref{le1}we may assume that $\mu$ is compactly supported: but then by Lemma\ref{le3}, $$\displaystyle\frac{|\mu(z)|^{2}}{1-|z|^{2}}\in CM(\Delta)$$ and if we consider the family $(t\mu)$ we see that $HD(\Lambda(G_{\mu})>1$ while $HD(\Lambda(G_{t\mu})=1$ for $t$ small, thus contradicting Ruelle's property.

\section{Proof of Theorem \ref{main2} }

We begin with the

{\bf Claim:} $HD(\bigwedge_{e}(G'))=1.$

  \noindent{Proof:} Recall that $S$ is recurrent (resp. transient) if the Brownian motion on $S$ is recurrent (resp. transient). The universal covering group of a recurrent (resp. transient) surface is of divergence (resp. convergence) type. To prove the claim, we need the following lemma, which is due to J.L. Fernandez and M. Melian, see (\cite{FM}, Theorem 1).

 \begin{lemma}\label{le3.1}
 Suppose $G$ is a first kind Fuchsian group such that the quotient $\Delta/G$ has infinite area. Then there are two possibilities:

  (i) If $G$ is of convergence type, then $\Lambda_{e}$ has full measure.

 (ii) If $G$ is of divergence type, then $\Lambda_{e}$ has measure zero, but its Hausdorff dimension is equal to $1.$
 \end{lemma}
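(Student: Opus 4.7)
\medskip

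The measure-theoretic halves of (i) and (ii) are essentially restatements of the Hopf--Tsuji--Sullivan dichotomy, so the real content of the lemma is the lower bound $HD(\Lambda_{e})\ge 1$ in case (ii); the plan separates these two pieces.

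First I would recall the dichotomy in its geodesic-flow formulation: for a first-kind Fuchsian group $G$, the group is of divergence type if and only if the geodesic flow on $T^{1}(\Delta/G)$ is conservative, equivalently $\Lambda_{c}(G)$ has full Lebesgue measure on $\partial\Delta$; symmetrically, $G$ is of convergence type if and only if the flow is completely dissipative and $\Lambda_{c}(G)$ is Lebesgue-null. Since $G$ is of the first kind, $\partial\Delta=\Lambda(G)=\Lambda_{c}(G)\sqcup\Lambda_{e}(G)$ modulo the countable set of parabolic fixed points, so (i) and the measure-zero half of (ii) follow at once.

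For the dimension lower bound in (ii), the plan is to use the infinite-area hypothesis to produce a Cantor-type subset $C\subset\Lambda_{e}$ of dimension close to $1$. Fix $o\in\Delta$ projecting to $p_{0}\in S$. The infinite-area assumption lets me choose a sequence of pairwise disjoint embedded hyperbolic disks (or short closed geodesic segments) $D_{n}\subset S$ with $d_{S}(p_{0},D_{n})\to\infty$ at a prescribed slow rate. Each $D_{n}$ has many $G$-preimages in $\Delta$, and the shadow lemma controls how the associated shadow arcs on $\partial\Delta$ shrink as one moves outward. I would then build a tree: at level $n$, inside each surviving interval I select a large family of shadows of preimages of $D_{n}$ that are geometrically separated from the preimages used at the previous level. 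Any point of the resulting Cantor set $C$ parametrises a geodesic ray from $o$ whose projection to $S$ visits the $D_{n}$ in order and hence leaves every compact set, so $C\subset\Lambda_{e}$.

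A standard mass-distribution argument, using the shadow-lemma estimate $\mathrm{diam}(\text{shadow})\asymp e^{-d}$ at hyperbolic distance $d$, then yields $HD(C)\ge 1-\varepsilon$ provided enough preimages are chosen at each level and the escape rate $d_{S}(p_{0},D_{n})$ is slow enough. Letting $\varepsilon\to 0$ and combining with the trivial bound $HD(\Lambda_{e})\le 1$ completes (ii). The hard part will be this quantitative Cantor construction: in the divergence-type regime the ``typical'' geodesic is recurrent, so I must arrange that the chosen branches actually force escape to infinity while still producing enough branching at each level to drive the ratio $\log(\text{branching})/\log(1/\text{diameter})$ up to~$1$.
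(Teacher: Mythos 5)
The paper does not actually prove this lemma: it is quoted from Fernandez--Melian \cite{FM} (their Theorem 1), and your outline follows the same overall strategy as their proof --- the Hopf--Tsuji--Sullivan dichotomy for the measure statements, and a tree of shadows of regions escaping to infinity for the dimension bound in (ii). The measure-theoretic part of your argument is correct as stated. The sketch of the dimension bound, however, has two genuine gaps, and they are precisely the two points where Fernandez--Melian have to work.

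First, a geodesic ray whose projection visits $D_{1},D_{2},\dots$ in order with $d_{S}(p_{0},D_{n})\to\infty$ need not escape: between consecutive visits it may return arbitrarily close to $p_{0}$, and if it does so infinitely often its endpoint is a \emph{conical} limit point, not an escaping one. So the Cantor set $C$ you build is not automatically contained in $\Lambda_{e}$. To force escape one must also confine the connecting geodesic segments to regions that themselves go to infinity; in \cite{FM} this is done by taking the stage-$i$ arcs inside the convex core of a geodesic subdomain $D_{i}'$ with $\mathrm{dist}(p_{0},D_{i}')\to\infty$. Second, and more seriously, the clause ``provided enough preimages are chosen at each level'' conceals the entire difficulty. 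Once the stage-$i$ arcs are confined in this way, the number of admissible branches of length about $L$ is of order $e^{\sigma_{i}L}$, where $\sigma_{i}$ is essentially the Poincar\'e exponent of the subsurface determined by $D_{i}'$ --- not of $G$ itself, whose exponent equals $1$ but counts mostly recurrent orbits you are not allowed to use. The mass-distribution argument then yields only $HD(C)\geq\liminf_{i}\sigma_{i}$, so you need geodesic subdomains whose exponents tend to $1$. That such domains exist on every recurrent (divergence-type) surface of infinite area is the key nontrivial input (Theorem 4.1 of \cite{FM}), and it is exactly where the divergence-type and infinite-area hypotheses enter; your sketch gives no argument for it, and without it the construction only produces a set of some positive dimension bounded away from $1$.
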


 For the case  $S$ being a transient hyperbolic Riemann surface (i.e. case (i)), the proof of the claim is simple. Since in this case there exists  $z\in S$ such that the set of geodesics from $z$ going to $\infty$ without hitting $\gamma$ has positive measure, but less than $1$. It follows that  the escaping limit set of $S'$ has positive measure and the claim follows.

Suppose now that $S$ is a recurrent hyperbolic Riemann surface with infinite area.
A domain $D \subset S$ is called a geodesic domain if its relative boundary consists of finitely many non-intersecting closed simple geodesics and its area is finite. Fix a point $p\in S$, by Theorem 4.1 in \cite{FM} we know that there exists a family $\{D_{i}\}_{i=0}^{\infty}$ of pairwise disjoint geodesic domains in $S$ satisfying:

(i) The boundary of $D_{i}$ and $D_{i+1}$ have at least a simple closed geodesic in common.

(ii) $\lim_{i\rightarrow\infty}dist(p, D_{i})=\infty.$

Let $D_{i}'\subset S'$ be the isometric image of $D_{i}.$ Without loss of generality we may suppose that $\gamma$ as stated in the theorem is part of the boundary of $D_{0}.$
 For the family  $\{D_{i}\}_{i=0}^{\infty}$,   the method used to prove Lemma 1 by J.L. Fernandez and M. Melian \cite{FM} is still valid. Modeled upon their method, we  get that $HD(\Lambda_{e}(G'))=1.$ For the readers' convenience, we include some details taken from \cite{FM}.

 Let $\{D'_{i}\}_{i=0}^{+\infty}$ be the family of geodesic domains of $S'$ constructed as above.
 For any $i$, let $S'_{i}$ be the Riemann surface obtained from $D'_{i}$ by pasting a funnel along each one of the simple closed geodesics of its boundary. For each $i$, we choose a simple closed geodesic  $\gamma_{i}$ from the common boundary $D'_{i}\cap D'_{i+1}$ and a point $P_{i}\in \gamma_{i}.$ By (\cite{FM}, Theorem 4.1) and noticing that $D'_{i}$ is the isometric image of $D_{i}$, we have
 $\delta_{i}\rightarrow 1$ when $i$ tends to infinity, where $\delta_{i}$ is the Poincare exponent of $S'_{i}.$

 For $\theta\in (0, \frac{1}{2}\pi)$, by (\cite{FM}, Theorem 5.1), we can choose a collection $\mathfrak{B}_{i}$ of geodesics in $S'_{i}$ with initial and final endpoint $P_{i}$ such that
$$L_{i}\leq\text{length}(\gamma)\leq L_{i}+C(P_{i}),\, \gamma\in \mathfrak{B}_{i}.$$
 The number of geodesic arcs in $\mathfrak{B_{i}}$ is at least $e^{\sigma_{i}}$, and both the absolute value of the angles between $\gamma$ and the closed geodesic $\gamma_{i}$ are less than or equal to $\theta,$
 where $L_{i}$ is a constant such that $L_{i}\rightarrow\infty$ as $i\rightarrow\infty$, $C(q_{i})$ is a constant depending only on the length of the geodesic $G_{k_{i}}$, and $\sigma_{i}<\delta(S'_{i})$, $\sigma_{i}\rightarrow 1$ as $i\rightarrow\infty$.
 Note that for each $i$, $D'_{i}$ is the convex core of $S'_{i},$
implying that every geodesic arc $\gamma\in \mathfrak{B}_{i}$ is contained in the convex core $D'_{i}.$

  Furthermore, for each $i$, we may choose a geodesic arcs $\gamma_{i}^{*}$ with initial point $P_{i}$ and final endpoint $P_{i+1}$ such that
 $$L_{i}\leq\text{length}(\gamma)\leq L_{i}+C(P_{i+1}),$$ and both the absolute value of the angles between $\gamma_{i}$, $\gamma_{i}^{*}$, and $\gamma_{i}^{*}$, $\gamma_{i+1}$  are less than or equal to $\theta.$

 Now we are going to construct a tree $\mathfrak{T}$ consisting of oriented geodesic arcs in the unit disk $\Delta.$

 First, lift $\gamma_{0}^{*}$ to the unit disk starting at $0$ (without loss of generality we may suppose that 0 projects onto $P_{0}$).  From the endpoint of the lifted $\gamma_{0}$ (which project onto $P_{1}$), lift the family $\mathfrak{B}_{1}$; from each of the end points of these liftings (which still project onto $P_{1}$), lift again $\mathfrak{B}_{1}$. Keep lifting $\mathfrak{B}_{1}$ in this way $M_{1}$ times.

 Next, from each one of the endpoints obtained in the process above, we lift $\gamma_{1}^{*}$, and from each one of the endpoints of the lifting of $\gamma_{1}^{*}$ (which project onto $P_{2}$), we lift the collection $\mathfrak{B}_{1}$ sucessively $M_{2}$ times as above. Continuously this process
 indefinitely we obtain a tree $\mathfrak{T}.$

 It is easy to see that $\mathfrak{T}$ contains uncountably many branches.  The tips of the branches of $\mathfrak{T}$ are contained in the escaping limit set $\Lambda_{e}$ of the covering group of $S'$.

By the proof of (\cite{FM}, Theorem 1.1), we know that for suitable sequence $\{M_{i}\}$ of repetitions, the dimension of the set of the tips of the branches of $\mathfrak{T}$ is 1. By the construction of the tree $\mathfrak{T}$ we see that the tree $\mathfrak{T}$ is a unilaterally connected graph. Hence the geodesic corresponding to any branch of $\mathfrak{T}$ does not tend to the funnel with boundary $\gamma.$ Hence the dimension of the escaping limit set $\Lambda_{e}$ of the covering group $G'$ is 1.

We can now prove the theorem.

As in the proof of Theorem {\ref{main}}, by Lemma \ref{le1} and Lemma \ref{le2}, we can choose a  compactly supported $G'$-compatible Beltrami coefficient $\mu$ such that $\delta(G'_\mu)>1$. Bishop \cite{Bi2} showed that the Hausdorff dimension of the escaping limit set is unchanged under any compact deformation. Hence for the deformation group $G'_{\mu}$, we have $HD(\Lambda_{e})(G'_{\mu})=HD(\Lambda_{e}(G'))$. By Lemma \ref{le3}, $$\displaystyle\frac{|\mu(z)|^{2}}{1-|z|^{2}}\in CM(\Delta)$$ and if we also consider the family $(t\mu)$, we see that $HD(\Lambda(G_{\mu}))>1$ while $HD(f_{t\mu}(\partial\Delta))=1$ for $t$ small. However, $HD(\Lambda_{e}(G'_{\mu}))=1$ for any $t\in[0,1],$  hence $HD(\Lambda(G_{t\mu}))=1$ for $t$ small, thus contradicting Ruelle's property.\qed

\section{ Proof of Theorem \ref{main1}}
Before giving the proof of this theorem, we first recall some preliminaries.

Suppose that $G$ is a finitely generated Fuchsian group of the first kind with a set of generators containing $n$ parabolic elements. By the work of R. Bowen and C. Series \cite{BS}, we know that there are countable partition $\mathcal{P}=\{I_{i}\}_{i=1}^{\infty}$ of the unit circle $S^{1}$ into intervals $I_{i}$ and a piecewise smooth map $f_{G}: S^{1}\rightarrow S^{1}$ so that:

   (1) the map $f_{G}$ is strictly monotonic on each $I_{i}\in \mathcal{P}$ and extends to a $C^{2}$-function on $\overline{I}_{i}$. (In fact, $f_{G}|I_{k} =g_{k}|I_{k},\, g_{k}\in G$);
\smallskip

   (2) if $f_{G}(I_{k})\cap I_{j}\neq\emptyset$,  then $f_{G}(I_{k})\supset I_{j}$;
\smallskip

   (3) for all $i$, $j$,  $\bigcup_{n=0}^{\infty}f^{n}(I_{i})\supset I_{j}.$

   The map $f_{G}$ is called a Markov map for $G$. This Markov map defines an iterated function systems (IFS).
Let us recall  the definition of an iterated function systems (IFS), see \cite{MU}.

Let $(X,\rho)$ be a non-empty compact metric space,  $I$ a countable index set with at least two elements, and $$S=\{\phi_{i}:X\rightarrow X, i\in I\}$$
a collection of injective contractions from $X$ to $X$ for which there exists
$0<s<1$ such that
$$\rho(\phi_{i}(x), \phi_{i}(y))\leq s\rho(x,y),\, i\in I,\, (x,y)\in X.$$
Any such collection of contractions is called an iterated function system.

Let $I^{n}$ denote the space of words of length $n$, $I^{\infty}$ the space of infinite sequences of symbols in $I$. Let $I^{*}=\bigcup_{n\geq1}I^{n}$ and for $\omega\in I^{n}, n\geq1,$ set
$$\phi_{\omega}=\phi_{\omega_{1}}\circ\phi_{\omega_{2}}\circ\cdot\cdot\cdot\circ\phi_{\omega_{n}}.$$
If $\omega\in I^{*}\cup I^{\infty}$ and $n\geq 1$ does not exceed the length of $\omega$, we denote by $\omega|_{n}$ the word $\omega_{1}\omega_{2}\cdot\cdot\cdot\omega_{n}.$
For $\omega\in I^{\infty}$, the set
$$\pi(\omega)=\bigcap_{n=1}^{\infty}\phi_{\omega|_{n}}(X)$$
is a singleton and therefore we can define a map $\pi:I^{\infty}\rightarrow X.$

The set
$$J=\pi(I^{\infty})=\bigcup_{\omega\in I^{\infty}}\bigcap_{n=1}^{\infty}\phi_{\omega|_{n}}(X)$$ is called the limit set associated to the system
$$S=\{\phi_{i}:X\rightarrow X, i\in I\}.$$

Let $\rho:I^{\infty}\rightarrow I^{\infty}$ be the left shift map on $I^{\infty}$, that is $\rho(\omega)=\omega_{2}\omega_{3}\cdot\cdot\cdot$. Since $\phi_{i}(\pi(\omega))=\pi(i\omega)$ for every $i\in I$, and we  get
$$\pi(\omega)=\phi_{\omega_{1}}(\pi(\rho(\omega)))$$
and
$$J=\bigcup_{i\in I}\phi_{i}(J).$$

For every $\sigma\geq 0$, we define
 $$\psi(\sigma)=\sum_{i\in I}\parallel\phi'_{i}\parallel^{\sigma}\leq\infty,$$
 where the norm $\parallel\cdot\parallel$ is the supremum norm taken over $X.$
 For $n\geq 1,$ let
 $$\psi_{n}(\sigma)=\sum_{\omega\in I^{n}}\parallel\phi'_{\omega}\parallel^{\sigma}.$$
 By \cite{MU}, we know that
 $$\psi_{n}(\sigma)<\infty\Leftrightarrow \psi(\sigma)=\psi_{1}(\sigma) <\infty.$$
 Let $\theta=\inf\{\sigma: \psi(\sigma)<\infty\}.$
 For $n\geq 1,$ the function $\log(\psi_{n})$ is convex on $(\theta, +\infty)$ and for these values of $\sigma,$
 $$P(\sigma)=\lim_{n\rightarrow\infty}\displaystyle\frac{1}{n}\log \psi_{n}(\sigma)$$
 always exists and is finite if and only if $\psi(\sigma)<\infty:$
 the function $P$ is called the topological pressure function.
By (\cite{MU}, Lemma 3.2), we know that  $P(\sigma)$ is strictly decreasing in the variable $\sigma$ on the interval $(\theta, +\infty).$ The iterated function system is regular if and only if  $P(\theta)=\infty,$ which is equivalent to $\psi(\theta)=\infty$.
 %Recall that a Borel probability measure $m$ is said to be $t$-conformal provided $m(J)=1$ and for every Borel set $A\subset X$ and every $i\in I$,
% $$m(\phi_{i}(A))=\int_{A}|\phi'_{i}|^{t}dm$$
% and
% $$m(\phi_{i}(X))\cap m(\phi_{j}(X))=0$$ for every $i,j\in I$ with $i\neq j.$
  %We call the system $S$ hereditarily regular if each cofinite subsystem of $S$ is regular.

 Let $G$ be a finitely generated Fuchsian group of first kind with a set of generators containing finitely many parabolic elements.
 Let $f_{G}$ be the associated Markov map. By Bowen and Series' work \cite{BS}, we know that there exists a subset $K\subset \partial\Delta$ which is the union of countable open intervals $\cup I_{i}\subset\partial\Delta$ such that the first return map $f_{K}: K\rightarrow K$
, $f_{K}(x)=f_{G}^{m(x)}(x)$, $m(x)=\inf\{m: f_{G}^{m}(x)\in K\}$ induced by $f_{G}$ satisfies an additional expanding condition:
 there exists an integer $N>0$ and a constant $\beta>1$ such that $(f_{K}^{N})'(x)\geq\beta$ for all $x\in K.$
We will use the intervals in $K$ as the index  and denote the index set by $I$. Then we get an IFS by the map $f_{K}$ as
$$S=\{\phi_{i}:\phi_{i}=f_{K}^{-1}|_{i}, \,i\in I\}.$$
These results remain valid for finitely generated second-kind Fuchsian groups as was proven by Anderson and Rocha \cite{AR} in the case of the set of generators containing no parabolic elements, but Bowen and Series result go through in this later case.

We can now prove the theorem.
\begin{proof}
Let $(\mu_{t})$ be  a  family of $G$-compatible Beltrami coefficients which  is analytic in $t\in\Delta$. By the self-similarity of the limit sets of quasi-Fuchsian groups $G_{t}$, in order  to study the dimensions of the limit set of the group $G_{t}$, it is enough to study the dimensions of the images of $K$ under quasiconformal map $f_{\mu_{t}}$. Conjugating by $f_{\mu_{t}}$, we get an IFS $S_{t}$ induced by  the IFS $S$ as $$S_{t}=\{\phi_{i}^{t}:  \phi^{t}_{i}=f_{\mu_{t}}\circ \phi_{i}\circ f^{-1}_{\mu_{t}},\, i\in I,\,t\in \Delta\}.$$

Let  $$\psi^{t}_{n}(\sigma)=\sum_{\omega\in I_{n}^{t}}\parallel(\phi_{\omega}^{t})'\parallel^{\sigma},~\, n\geq1$$
and  $P$ be the topological pressure function as follows
 $$P(t, \sigma)=\lim_{n\rightarrow\infty}\displaystyle\frac{1}{n}\log \psi_{n}^{t}(\sigma), t\in \Delta,  \sigma\in(\theta_{t}, +\infty),$$
 where $\theta_{t}=\inf_{\sigma}\{\sigma:\psi^{t}_{1}(\sigma)<\infty\}$.

If the set of generators of $G_{t}$ contains no parabolic elements, the index set $I$ is  finite. Thus $\theta_{t}=-\infty$  and the system is regular. When the set of generators of $G_{t}$ contains some parabolic elements, we need the following
   %For the regular system, since the pressure function is strictly decreasing on
%   $[\theta_{t}, \infty),$ there exists a unique $h$ such that $P(t,h)=0.$ Ruelle \cite{Ru} showed that the unique real number $h$ is just the Hausdorff dimension of $f_{\mu_{t}}(K).$ To prove the theorem, we first prove the following critical lemma.

   \begin{lemma}
   For any $t\in\Delta,$ $\theta_{t}=\frac{1}{2}$ and the IFS $S_{t}$  is regular.
   \end{lemma}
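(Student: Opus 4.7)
The plan is to reduce the computation of $\theta_t$ and of $\psi^t(\theta_t)$ to the undeformed case treated in Mauldin--Urbanski, showing that the quasiconformal deformation only changes multiplicative constants and not the decay exponent. First, I would decompose the index set as $I = I_0 \sqcup I_1 \sqcup \cdots \sqcup I_n$, where $I_0$ is the finite collection of ``non-cusp'' branches of the first return map $f_K$, and each $I_j$ ($j=1,\ldots,n$) is naturally identified with $\mathbb{N}$ and enumerates the successive images of an initial branch under iterates of the $j$-th parabolic generator of $G$. Since $I_0$ contributes only finitely many terms, the threshold $\theta_t$ and the behavior of $\psi^t$ at $\theta_t$ are determined entirely by the cusp families $I_j$.

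Second, I would exploit the fact that quasiconformal conjugation preserves parabolicity: each $p_j^t := f_{\mu_t}(p_j)$ is a parabolic fixed point of $G_{\mu_t}$, and the cyclic stabilizer is M\"obius-conjugate, via a M\"obius map $M_t$, to $\langle z\mapsto z+1\rangle$ with $p_j^t$ sent to infinity. In these cusp coordinates, the standard horoball theorem for Kleinian groups ensures that $M_t^{-1}\bigl(f_{\mu_t}(\Delta)\bigr)$ contains a half-plane $\{\mathrm{Im}\,z > H_{j,t}\}$, and the orbit $\{w_0+k\}_{k\ge 1}$ of a lifted base point lies at bounded Euclidean distance from the corresponding boundary. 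Pulling back by $M_t$, whose derivative at $w_0+k$ is of order $k^{-2}$, then yields
\[
\mathrm{dist}\bigl(p_t^k(z_0'),\,\partial f_{\mu_t}(\Delta)\bigr) \;\asymp\; C_{j,t}\,k^{-2}.
\]

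Third, each $\phi_i^t$ is the restriction to the quasi-circle of a M\"obius element of $G_{\mu_t}$, so the bounded-distortion property for the iterated expanding map $(f_K^N)_t$ survives conjugation, and $\|(\phi_i^t)'\|$ is comparable to the Euclidean diameter of the image interval $f_{\mu_t}(I_i)$. For the $k$-th index in the $j$-th cusp family this diameter is comparable to the distance-to-boundary estimated above, hence $\asymp k^{-2}$ up to a constant depending only on $j$ and $t$. Combining,
\[
\psi^t(\sigma)\;\asymp\;O(1)\;+\;\sum_{j=1}^{n}\sum_{k\geq 1}\frac{D_{j,t}^{\sigma}}{k^{2\sigma}},
\]
which is finite iff $\sigma>1/2$, giving $\theta_t=1/2$, and which equals $+\infty$ at $\sigma=1/2$ since $\sum 1/k=\infty$. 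By the equivalence $\psi(\theta)=\infty \Leftrightarrow P(\theta)=\infty$ recalled earlier, $S_t$ is therefore regular.

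The step I expect to be the real obstacle is the cusp-geometry estimate: verifying that the rank-one horoball structure for the quasi-Fuchsian group $G_{\mu_t}$ gives a uniform-in-$k$ decay $\asymp k^{-2}$ of $\mathrm{dist}(p_t^k(z_0'),\partial f_{\mu_t}(\Delta))$ with only the multiplicative constant depending on $t$. This is essentially a Kleinian-group input rather than an analytic computation, but some care is required to ensure that the horoball size $H_{j,t}$ and the derivative control for $M_t$ depend on $t$ in a harmless way. Once this input is in hand, the remainder is routine bookkeeping.
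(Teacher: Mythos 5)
Your proof is correct and follows essentially the same route as the paper: split the index set into the finitely many hyperbolic (non-cusp) branches and the parabolic (cusp) families, and show that the latter contribute $\asymp\sum_{k}k^{-2\sigma}$, which converges exactly for $\sigma>1/2$ and diverges at $\sigma=1/2$. The only difference is that you derive the $k^{-2}$ decay of the parabolic orbit from cusp/horoball geometry directly, whereas the paper simply invokes Beardon's estimate for parabolic subgroups; the substance is identical.
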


\begin{proof}
For fixed $t\in \Delta,$  we need to show that
$$\psi^{t}_{1}(\sigma)=\sum_{i\in I}\parallel(\phi^{t}_{i})'\parallel^{\sigma} <\infty, \, \sigma>\frac{1}{2},$$ and
$$\psi^{t}_{1}(\frac{1}{2})=\sum_{i\in I}\parallel(\phi^{t}_{i})'\parallel^{\frac{1}{2}} =\infty.$$

Without loss of generality, we may suppose that the generators $\{\gamma_{1},\cdot\cdot\cdot,\gamma_{m}, g\}$ of $G$ contains only one parabolic element $g$.
Now we divide $I$ into two parts, $\mathcal{I}_{h}$ and $\mathcal{I}_{p}$, where
$$\mathcal{I}_{h}=\{i\in I:\phi_{i} ~~~\text{is hyperbolic}\}$$
and
$$\mathcal{I}_{p}=I\setminus \mathcal{I}_{h}=\{i\in I:\phi_{i} ~~~\text{is parabolic}.\}$$
Then, we have
$$\psi^{t}_{1}(\sigma)=\sum_{i\in I}\parallel(\phi^{t}_{i})'\parallel^{\sigma}=\sum_{i\in \mathcal{I}_{h}}\parallel(\phi^{t}_{i})'\parallel^{\sigma}+\sum_{i\in \mathcal{I}_{p}}\parallel(\phi^{t}_{i})'\parallel^{\sigma}.$$

By the property of The Markov map $f_{G}$ and the definition of $f_{K},$ the index set $\mathcal{I}_{h}$ is a finite set. Hence
$$\sum_{i\in \mathcal{I}_{h}}\parallel(\phi^{t}_{i})'\parallel^{\sigma}<\infty.$$
 Since $\infty$ is an ordinary point of $G_{t},$ by \cite{Be} we know
 $$\sum_{i\in \mathcal{I}_{p}}\parallel(\phi^{t})'_{i}\parallel^{\sigma} \asymp\sum\frac{1}{n^{2\sigma}},$$
 where $A\asymp B$ means $A/C<B<CB$ for some implicit constant $C,$ the constant $C$ depends only on the number of hyperbolic generators of $G_{t}$ and the complex dilatation of $f_{\mu_{t}}.$ The lemma follows.
\end{proof}

End of the proof:
R.D. Mauldin and M. Urbanski\cite{MU} showed that for a regular system, the dimension of the limit set is the unique zero of the function $\sigma\mapsto P(t, \sigma).$
To finish the proof of the theorem, it only remains to prove that the zero varies real-analytic with respect to $t$. This follows from the classical thermodynamic formalism (a generalization of the Perron-Frobenius theorem, see \cite{Bo, Ru} ): $\exp{P(t,\sigma)}$ is an isolated eigenvalue of an transfer operator. The theorem follows from the implicit function theorem applied to $(t,\sigma)\mapsto \exp(P(t, \sigma)).$

  %For $\omega\in I^{\infty}$, let $$\zeta_{t}(i\omega)=\log|(\phi_{\omega_{1}}^{t})'(\pi_{t}(\rho(\omega)))|,$$
%  and let $$\mathcal{H}_{\infty}=\{g: g \text{ is bounded and continuous function on } I^{\infty}\},$$
%  where $\rho$ is the left shift operator, $$\pi_{t}(\omega)=\bigcap_{n=0}^{\infty}\phi_{\omega|_{n}}^{t}(X)$$
%and $\phi_{\omega|_{n}}^{t}=\phi_{\omega_{1}}^{t}\circ\phi_{\omega_{2}}^{t}
%\circ\cdot\cdot\cdot\circ\phi_{\omega_{n}}^{t}.$
%The transfer operator (or Ruelle operator)
%  $\mathcal{L}_{_{\zeta(t)}}$ on
%  $\mathcal{H}_{\infty}$ defined as
%  $$\mathcal{L}_{_{\zeta_{t}}}(g)(\omega)=\sum_{i\in I}exp(\zeta_{t}(i\omega))g(i\omega).$$
%  By a generalization of the Perron-Frobenius theorem(see \cite{Ru} ), there exists a positive eigenfunction $e^{\psi}$, unique up to scale, satisfying
%  $$\mathcal{L}_{_{\zeta(t)}}(e^{\psi})=\lambda(t, \sigma)e^{\psi},~~(t,\sigma)\in\Delta\times(\theta_{t}, +\infty).$$
%  By \cite{MU1}, we know $$P(t, \sigma)=\log{\lambda(t, \sigma)},(t, \sigma)\in\Delta\times(\theta_{t}, +\infty),$$
%  where $\lambda(t, \sigma)$ is a simple isolated eigenvalue of the operator
%  $\mathcal{L}_{_{\zeta(t)}}$ on
%  $\mathcal{H}_{\infty}$.
%
%  By [\cite{UZ}, Lemma 2.1]  the function $t\mapsto \mathcal{L}_{_{\zeta(t)}}$ is holomorphic. Hence by the Implicit Function Theorem, the zero of the topological pressure  $P(t, \sigma)=0$ is real analytic in $t\in\Delta$. Hence Bowen's formula gives our theorem.
  \end{proof}

\section{Proof of Theorem \ref{main3}}

Let  $\mathbb{H}$ be the upper half plane $\{z: Im(z)>0\}$, $\mathcal{D}^{*}_{1}$ the closed disk with diameter $[0, 2]$ and $\mathcal{D}^{*}_{n}$, $n\geq 1$,  the closed disk with diameter $[2^{n-1}, 2^{n}].$
We consider the domain $$\Omega=\mathbb{H}\setminus((\cup_{n\geq1}\mathcal{D}^{*}_{n})
\cup(\cup_{n\geq 1}(-\mathcal{D}^{*}_{n}))).$$

Let $\phi$ be the conformal mapping from $\Omega$ onto $\mathbb{H}$ fixing $0$, $1$ and $\infty.$ We put $z_{0}=0$, and $z_{n}=\phi(2^{n}), \, n\geq1$ and  $z_{n}=\phi(-2^{n}), \, n\leq-1.$ Let $\sigma_{n}$ be the reflection in $\mathcal{D}^{*}_{n}$ and $\tau(z)=-\bar{z}$.
By Rubel and Ryff's construction \cite{RR} of covering group of Riemann surface
$S=\mathbb{C}\setminus \{z_{n}\},$ the Fuchsian group $\Gamma$ generated by $\{\tau\circ\sigma_{n}\}^{\infty}_{n=1}$ uniformizes the surface $S$, in the sense that $S\simeq \mathbb{H}/\Gamma.$

If we return to the disk model, we then have $S\simeq \Delta/G$, where $G=\eta\circ\Gamma\circ\eta^{-1}$ and $\eta:\mathbb{H}\rightarrow\Delta$ is an isomorphism which sends $\infty$ to $i$, $1$ to $1$ and $0$ to $-i.$ Denote by $\mathcal{D}_{n}=\eta(\mathcal{D}^{*}_{n})$.
The disks $\mathcal{D}_{n}\bigcap\bar{\Delta}$ accumulate to $i$ and the diameter of $\mathcal{D}_{n}$ is comparable to $2^{-n}.$

 Let $F$ be the domain $\Delta\setminus\bigcup_{i\neq 0} \mathcal{D}_{i}$. The domain $F$ is symmetric about the $y-$axis. Let $E$ be the intersection of the closure of $F$ with the unit circle $\partial\Delta$. By the construction of  $G$ we know that $E$ contains countably many points.

 As in \cite{BS}, we denote by $\mathcal{N}$ the set of images of the sides of $F$ under $G.$ Let us also denote by $I_{n}$, $n\in \mathbb{Z}\setminus\{0\}$,
 the intersection of $\mathcal{D}_{n}$ with $\partial\Delta.$ For each point $e\in E$, we consider the set of all the elements of $\mathcal{N}$ passing through $e$ and not being a side of $F$, denote it by $\mathcal{N}_{e}$. We denote by $\mathcal{N}_{E}$ the set that contains all the elements in $\mathcal{N}$  meeting $\partial\Delta$ with only one endpoint in $E$.
 For each $n\in \mathbb{Z}$, the intervals formed by the intersection of the elements of $\mathcal{N}_{E}$ with $\partial\Delta$ then form a partition of each interval $I_{n}$. Let $e_{n-1}$, $e_{n}$(in anti-clockwise order on $\partial\Delta$) be the endpoints of $I_{n}$. For $k\leq-1$, we denote $I_{n,k}$  as the subinterval of $I_{n}$ with endpoints just as the $|k|-$th and $(|k|+1)$-th points in clockwise order of the set of the intersection of elements of
  $\mathcal{N}_{e_{n-1}}$ with $\partial\Delta.$ Similarly for $k\geq1$, we denote $I_{n,k}$  as the subinterval of $I_{n}$ with endpoints just as the $k-$th and $(k+1)$-th points in anti-clockwise order of the set of the intersection of
  elements of $\mathcal{N}_{e_{n}}$ with $\partial\Delta.$ In this case $I_{n,0}$ is just the subinterval of $I_{n}$  with endpoints just as the leftmost point in anticlockwise order of the set of the intersection of elements of
  $\mathcal{N}_{e_{n-1}}$ with $\partial\Delta$ and the rightmost point in the anticlockwise order of the set of the intersection of elements of $\mathcal{N}_{e_{n}}$ with $\partial\Delta.$   Hence we have $I_{n}=\bigcup_{k\in \mathbb{Z}}I_{n,k}.$
The set $K$ in \cite{BS}is just $\bigcup_{n\in \mathbb{Z}\setminus\{0\}}I_{n,0}.$

On each interval $I_{n}$, $n\in \mathbb{Z}\setminus\{0\}$, the Markov map $f$ is equal to $s_{n}\circ s_{0}$, where $s_{0}$ is the reflection across the $y-$axis and $s_{n}$ is the reflection across $\mathcal{D}_{n}.$ Then the induced map is equal to $f$ on $I_{n, 0}$ and to $f^{n_{k}}$ on $I_{n,k}$ where $n_{k}$ is the first integer such that $f^{n_{k}}(I_{n,k})\subset K.$

Let $I=\{(n,k), n\in \mathbb{Z}\setminus\{0\}, k\in \mathbb{Z}\}.$
For $i\in I,$ let us put $\phi_{i}=\psi_{i}^{-1}$, where  $\psi_{i}$ stand for $f|_{I_{i}}$. Then we get an IFS $S$ as
$$S=\{\phi_{i}: i\in I\}.$$

Let $(\mu_{t})$ be a family of $G-$ compatible Beltrami coefficients which is analytic in $t\in\Delta$ and $G_{t}$ the deformation group of $G$ under the quasiconformal mapping $f_{\mu_{t}}.$ Conjugating by $f_{\mu_{t}}$, we get an IFS $S_{t}$ induced by  the IFS $S$ as $$S_{t}=\{\phi_{i}^{t}:  \phi^{t}_{i}=f_{\mu_{t}}\circ \phi_{i}\circ f^{-1}_{\mu_{t}},\, i\in I,\,t\in \Delta\}.$$

 Let  $$\psi^{t}(\sigma)=\sum_{i\in I}\parallel(\phi_{i}^{t})'\parallel^{\sigma}.$$
 In order to show the Fuchsian group $G$ has Ruelle's property, we need to show the following, where the terminology comes from \cite{MU},
 \begin{lemma}
 For any $t\in \Delta,$ the IFS $S_{t}$ is regular with the $\theta$ number equal to $\frac{1}{2}.$
 \end{lemma}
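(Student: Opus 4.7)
The plan is to mimic the structure of the previous lemma in Section 4 but now with two levels of summation, since the index set $I = (\mathbb{Z}\setminus\{0\})\times\mathbb{Z}$ records both which cusp $n$ one sits at and which parabolic iterate $k$ within that cusp one takes. Concretely, I would split
\[
\psi^{t}(\sigma) \;=\; \sum_{n\in\mathbb{Z}\setminus\{0\}}\;\sum_{k\in\mathbb{Z}} \bigl\|(\phi_{(n,k)}^{t})'\bigr\|^{\sigma},
\]
and handle the inner sum by standard parabolic derivative asymptotics, then handle the outer sum using the dyadic geometry of the disks $\mathcal{D}_{n}$.

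For the inner sum, the branches $\phi_{(n,k)}^{t}$, $k\in\mathbb{Z}$, are (up to the initial first-return piece indexed by $k=0$) iterates of the parabolic element fixing the cusp corresponding to $z_{n}$, composed with a base map that pulls into $I_{n,0}$. For the undeformed group $G$, Beardon's estimate for parabolic orbits gives
\[
\bigl\|\phi_{(n,k)}'\bigr\| \;\asymp\; \frac{c_{n}}{k^{2}}, \qquad |k|\geq 1,
\]
and the constant $c_{n}$ is comparable to the diameter of a fundamental horocycle at the cusp, which by the construction of Section 5 is comparable to $\operatorname{diam}(\mathcal{D}_{n})\asymp 2^{-|n|}$. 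Since $f_{\mu_{t}}$ is a quasiconformal conjugation whose dilatation is bounded on any compact subset of $t\in\Delta$, the parabolic cusp structure and the asymptotic shape of the horocycles are preserved, and one gets a deformed estimate
\[
\bigl\|(\phi_{(n,k)}^{t})'\bigr\| \;\asymp\; \frac{c_{n}(t)}{k^{2}}, \qquad |k|\geq 1,
\]
with $c_{n}(t)\lesssim 2^{-|n|}$ uniformly on compacta in $t$. The $k=0$ term is absorbed into $c_{n}(t)$ by bounded distortion.

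Divergence at $\sigma=\tfrac12$ is then immediate from a single cusp: for any fixed $n_{0}$,
\[
\psi^{t}(1/2) \;\geq\; \sum_{k\neq 0}\bigl\|(\phi_{(n_{0},k)}^{t})'\bigr\|^{1/2} \;\asymp\; c_{n_{0}}(t)^{1/2}\sum_{k\neq 0}\frac{1}{|k|} \;=\;\infty.
\]
For $\sigma>\tfrac12$, first $\sum_{k\neq 0} k^{-2\sigma}$ converges, giving
\[
\sum_{k}\bigl\|(\phi_{(n,k)}^{t})'\bigr\|^{\sigma} \;\leq\; C_{\sigma}\, c_{n}(t)^{\sigma},
\]
and then summing in $n$ against $c_{n}(t)^{\sigma}\lesssim 2^{-\sigma|n|}$ yields a finite geometric series. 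Hence $\psi^{t}(\sigma)<\infty$ for all $\sigma>\tfrac12$, $\psi^{t}(1/2)=\infty$, so $\theta_{t}=\tfrac12$ and the system is regular in the sense of \cite{MU}.

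The main obstacle I expect is the second asymptotic: justifying that $c_{n}(t)\lesssim 2^{-|n|}$ uniformly in $t$. In the undeformed group this follows from the explicit dyadic geometry of $\Omega$ combined with Koebe-type estimates on the uniformizing map $\phi$, together with a standard computation of parabolic translation length in a horocycle of prescribed Euclidean size. Propagating this to the deformed groups requires a compactness/normality argument: on any compact $K\subset\Delta$, $\{f_{\mu_{t}}\}_{t\in K}$ is a normal family of uniformly quasiconformal self-maps of the sphere, so it distorts horocyclic widths by at most a $K$-dependent factor, which preserves the $2^{-|n|}$ decay up to a constant. Once this is in place, the rest of the proof is a direct two-variable version of the summation done for Theorem \ref{main1}.
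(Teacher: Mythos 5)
Your decomposition of $\psi^{t}(\sigma)$ into an outer sum over the cusp index $n$ and an inner sum over the parabolic index $k$, with the inner sum controlled by the $1/k^{2}$ parabolic derivative asymptotics and the outer sum by geometric decay in $n$, is exactly the structure of the paper's proof; the divergence at $\sigma=\tfrac12$ coming from the harmonic series in $k$ at a single cusp, and the convergence for $\sigma>\tfrac12$ from $\sum_{k}k^{-2\sigma}<\infty$ followed by a geometric series in $n$, are obtained in the same way.

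The one step that does not hold as you state it is the uniformity claim $c_{n}(t)\lesssim 2^{-|n|}$, justified by saying that a uniformly quasiconformal family ``distorts horocyclic widths by at most a $K$-dependent factor.'' Quasiconformal maps are H\"older continuous but not bi-Lipschitz, so they do not preserve the decay rate $2^{-|n|}$ up to a multiplicative constant: the image of an interval of length $2^{-n}$ under a $K$-quasiconformal map can be as large as $C\,2^{-n/K}$ and as small as $c\,2^{-nK}$. The paper handles this point by bounding $\|(\phi^{t}_{n,0})'\|$ above by $2^{-n\alpha}$ and below by $2^{-n\beta}$, where $\alpha$ and $\beta$ come from the H\"older exponents of $f_{\mu_{t}}$ and $f_{\mu_{t}}^{-1}$, and only then multiplies by the $1/k^{2}$ factor. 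This repair costs you nothing: for the divergence at $\sigma=\tfrac12$ a single cusp suffices, exactly as you argue, and for $\sigma>\tfrac12$ the outer sum $\sum_{n}2^{-n\alpha\sigma}$ is still a convergent geometric series for any $\alpha>0$. So your argument becomes correct, and coincides with the paper's, once the exponent $1$ in your bound on $c_{n}(t)$ is replaced by the H\"older exponent of the quasiconformal conjugacy.
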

 For fixed $t\in \Delta,$  we need to show that
$$\psi^{t}_{1}(\sigma)=\sum_{i\in I}\parallel(\phi^{t}_{i})'\parallel^{\sigma} <\infty, \, \sigma>\frac{1}{2},$$ and
$$\psi^{t}_{1}(\frac{1}{2})=\sum_{i\in I}\parallel(\phi^{t}_{i})'\parallel^{\frac{1}{2}} =\infty.$$

Since two connective intervals $I_{n},$ $I_{n+1}$ have comparable length, the same will be true for $f_{\mu_{t}}(I_{n}),$ $f_{\mu_{t}}(I_{n+1}),$ and thus, by bounded distortion, the above quantization
$\parallel(\phi^{t}_{n,0})'\parallel$  will be bounded by $2^{-n\alpha},$
where $\alpha$ is the H\"{o}lder exponent of the quasiconformal map $f_{\mu_{t}},$ and the bounded below by $2^{-n\beta}$, where $\beta$  is the H\"{o}lder exponent of $f^{-1}_{\mu_{t}}.$

In order to estimate $\parallel(\phi^{t}_{n,0})'\parallel$, we notice that $f_{\mu_{t}}$ conjugates the parabolic elements of $G$ to parabolic elements of $G_{t}$, from which one can get that
$$\parallel(\phi^{t}_{n,k})'\parallel\asymp\parallel
(\phi^{t}_{n,0})'\parallel\displaystyle\frac{1}{k^{2}}\leq C\displaystyle\frac{2^{-n\alpha}}{k^{2}}$$
and also
$$\parallel(\phi^{t}_{n,k})'\parallel\geq c\displaystyle\frac{2^{-n\beta}}{k^{2}},$$
where $C$ and $c$ are some constants which do not depend on the parameter $t.$
This proves the lemma.\qed

 As in the proof of Theorem \ref{main3}, R.D. Mauldin and M. Urbanski\cite{MU} showed that for a regular system, the dimension of the limit set is the unique zero of the function $\sigma\mapsto P(t, \sigma).$
 By the classical thermodynamic formalism (a generalization of the Perron-Frobenius theorem, see \cite{Ru} ) we know $\exp{P(t,\sigma)}$ is an isolated eigenvalue of a transfer operator. The theorem follows from the implicit function theorem applied to $(t,\sigma)\mapsto \exp(P(t, \sigma)).$\qed

\end{document}